\newtheorem{lemma}{Lemma} [section]
\newtheorem{thm}[lemma]{Theorem}
\newtheorem{conjecture}[lemma]{Conjecture}
\newtheorem{defi}[lemma]{Definition}
\theoremstyle{remark}
\newtheorem*{remark}{Remark}
\numberwithin{equation}{section}
\DeclareMathOperator{\pr}{{pr}}
\DeclareMathOperator{\id}{{id}}
\newcommand{\N}{{\mathbb N}}
\newcommand{\Q}{{\mathbb Q}}
\newcommand{\bG}{{\mathbb G}}
\newcommand{\Qbar}{{\overline\Q}}
\newcommand{\C}{{\mathbb C}}
\newcommand{\M}{{\mathbb M}}
\newcommand{\lra}{\longrightarrow}
\newcommand{\A}{{\mathbb A}}
\newcommand{\bP}{{\mathbb P}}
\newcommand{\hh}{\widehat{h}}
\DeclareMathOperator{\nt}{nt}
\newcommand{\bC}{{\mathbb C}}
\newcommand{\bA}{{\mathbb A}}
\newcommand{\bQ}{{\mathbb Q}}
\newcommand{\cO}{\mathcal{O}}
\newcommand{\cL}{\mathcal{L}}
\newcommand{\cJ}{\mathcal{J}}
\newcommand{\bQb}{\overline{\bQ}}
\begin{document}

\title[Proof of a dynamical Bogomolov conjecture]{Proof of a
    dynamical Bogomolov conjecture for lines under polynomial actions}

\author{Dragos Ghioca}

\address{
Dragos Ghioca \\
Department of Mathematics \& Computer Science\\
University of Lethbridge \\
Lethbridge, AB T1K 3M4 
}

\email{dragos.ghioca@uleth.ca}

\author{Thomas J. Tucker}

\address{
Thomas Tucker\\
Department of Mathematics\\
Hylan Building\\
University of Rochester\\
Rochester, NY 14627
}

\email{ttucker@math.rochester.edu}

\thanks {The first author was partially supported by NSERC.
The second author was partially supported by NSA
    Grant 06G-067 and NSF Grant DMS-0801072.}
\begin{abstract}
  We prove a dynamical version of the Bogomolov conjecture
  in the special case of lines in $\bA^m$ under the action of a map
  $(f_1,\dots,f_m)$ where each $f_i$ is a polynomial in $\Qbar[X]$ of the same degree.
\end{abstract}

\date{\today} \subjclass{Primary 14G25; Secondary 37F10, 11C08}
\maketitle

\section{Introduction}

In 1998, Ullmo \cite{Ullmo} and Zhang \cite{Zhang} proved the
following conjecture of Bogomolov \cite{Bog}.
\begin{thm} Let $A$ be an abelian variety defined over a
  number field with N\'eron-Tate height $\hh_{\nt}$ and let $W$ be a
  subvariety of $A$ that is not a torsion translate of an abelian
  subvariety of $A$.  Then there exists an $\epsilon > 0$
  such that the set
$$ \{ x \in A(\bQb) \; \mid \; \hh_{\nt}(x) \le \epsilon \}$$
is not Zariski dense in $W$.  
\end{thm}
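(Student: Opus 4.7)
The plan is to follow the Ullmo--Zhang strategy, whose central tool is the equidistribution theorem for small algebraic points due to Szpiro--Ullmo--Zhang. First, I would recall that theorem in the required form: for any generic sequence $\{x_n\} \subset A(\bQb)$ with $\hh_{\nt}(x_n) \to 0$, the uniform probability measures supported on the Galois orbits of $x_n$ converge weakly on $A(\bC)$ to the normalized Haar measure of the complex torus $A(\bC)$; restricted to any subvariety $V$ that is the Zariski closure of such a generic sequence, they converge to a canonical measure $\mu_V$ on $V(\bC)$.

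Now suppose for contradiction that the set of $\epsilon$-small points is Zariski dense in $W$ for every $\epsilon > 0$. By a diagonal extraction, one obtains a generic sequence $\{x_n\}\subset W(\bQb)$ with $\hh_{\nt}(x_n) \to 0$, so equidistribution yields the existence of a canonical measure $\mu_W$ on $W(\bC)$ governing the limit. The next step is the geometric reduction via the \emph{difference morphism}: for each integer $N \geq 2$ consider
\[ \phi_N \colon W^N \lra A^{N-1}, \qquad (x_1,\dots,x_N)\longmapsto (x_2-x_1, x_3-x_1,\dots, x_N-x_1), \]
and let $Y_N := \phi_N(W^N)$. Using that $W$ is not a torsion translate of an abelian subvariety, one shows (by inducting on $\dim W$ and studying the stabilizer subgroup $\mathrm{Stab}(W)\subset A$, which must have smaller dimension than $W$) that for $N$ sufficiently large $\phi_N$ is generically finite onto $Y_N$.

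Next, since $\hh_{\nt}$ is a quadratic form, differences of small points on $W$ are small points on $Y_N$, and genericity is preserved; hence one may apply equidistribution on both sides. This forces the pushforward $(\phi_N)_* \mu_W^{\otimes N}$ to coincide, up to normalization, with the canonical measure $\mu_{Y_N}$. Because $\phi_N$ is generically finite, comparing the two measures---and in particular their self-convolution and translation-invariance properties---shows that $\mu_W$ itself must be invariant under translation by a positive-dimensional subgroup, contradicting the choice of $W$. Once $W$ is known to be a translate $a + B$ of an abelian subvariety, a short separate argument using that $\hh_{\nt}(a+b)=\hh_{\nt}(a)+\hh_{\nt}(b)+2\langle a,b\rangle$ and the existence of small points forces $a$ to be torsion.

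The main obstacle is the passage from the measure-theoretic equality $(\phi_N)_*\mu_W^{\otimes N} = \mu_{Y_N}$ to the algebraic conclusion that $W$ is a coset. This is the rigidity heart of Zhang's argument: one must exploit both the smoothness/regularity of the canonical measures (they are essentially Haar measures restricted to the smooth locus) and the precise combinatorics of how the difference morphism interacts with translation invariance, in order to extract a positive-dimensional stabilizer of $W$ from the identity of measures. Everything else---the equidistribution input and the deduction that $a$ is torsion---is comparatively formal.
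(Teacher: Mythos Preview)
The paper does not prove this theorem at all: it is stated in the introduction as a known result, with the proof attributed entirely to Ullmo \cite{Ullmo} and Zhang \cite{Zhang}. There is therefore no ``paper's own proof'' to compare your proposal against. Your sketch is a reasonable outline of the actual Ullmo--Zhang argument (equidistribution of small points, the difference map $W^N\to A^{N-1}$, generic finiteness when $W$ is not a coset, and the resulting measure-rigidity contradiction), but for the purposes of this paper the theorem functions purely as motivating background and is never argued.
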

Earlier, Zhang \cite{zhangvar} had proved a similar result for the
multiplicative group $\bG_m^n$.  Zhang \cite{zhangadelic, ZhangLec}
also proposed a more general conjecture for what he called {\it
  polarizable} morphisms; a morphism $\Phi: X \lra X$ on a projective
variety $X$ is said to be polarizable if there is an ample line bundle
$\cL$ on $X$ such that $\Phi^* \cL \cong q \cL$ for some integer $q > 1$.
When a polarizable map $\Phi$ is defined over a number field, it gives
rise to a canonical height $\hh_\Phi$ with the property that
$\hh_\Phi(\Phi(\alpha)) = q \hh_\Phi(\alpha)$ for all $\alpha \in
X(\bQb)$.  Zhang makes the following Bogomolov-type conjecture in this
more general context.

\begin{conjecture}\label{conj}(Zhang) Let $\Phi: X \lra X$ be a polarizable
  morphism of a projective variety defined over a number field and let
  $W$ be a subvariety of $X$ that is not preperiodic under $\Phi$.
  Then there exists an $\epsilon > 0$ such that the set
$$ \{ x \in W(\bQb) \; \mid \; h_{\Phi}(x) \le \epsilon \}$$
is not Zariski dense in $W$.
\end{conjecture}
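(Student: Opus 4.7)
The full conjecture remains open, so I restrict to the special case the paper addresses: $X = \bA^m$, $\Phi = (f_1,\dots,f_m)$ with each $f_i \in \Qbar[X]$ of the same degree $d \geq 2$, and $W$ a line $L \subset \bA^m$. I would attack this using arithmetic equidistribution combined with rigidity results for polynomials sharing a measure of maximal entropy.

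Parametrize $L$ as $t \mapsto (a_1 t + b_1, \dots, a_m t + b_m)$. Since $\hhat_\Phi(x_1,\dots,x_m) = \sum_i \hhat_{f_i}(x_i)$, a point of small height on $L$ corresponds to a value of $t$ at which every $\hhat_{f_i}(a_i t + b_i)$ is simultaneously small. For each index $i$ with $a_i \neq 0$, conjugation by $\sigma_i(t) = a_i t + b_i$ yields a polynomial $g_i = \sigma_i^{-1} \circ f_i \circ \sigma_i$ of degree $d$ with $\hhat_{g_i}(t) = \hhat_{f_i}(\sigma_i(t))$. Indices with $a_i = 0$ force the $i$-th coordinate to be preperiodic for $f_i$ (else the line could contain no small-height sequence at all) and can be dismissed quickly.

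Assume for contradiction that the set of small-height points on $L$ is Zariski dense. Pulling back through the parametrization gives a Zariski-dense set of $t$-values along which every $\hhat_{g_i}(t)$ tends to zero. Applying arithmetic equidistribution (Baker--Rumely, Chambert-Loir, Favre--Rivera-Letelier, Yuan) separately with respect to each $g_i$ at every place $v$ of a suitable number field forces the Galois orbits of these $t$-values to equidistribute to the equilibrium measure $\mu_{g_i,v}$ for every relevant $i$. Since one sequence cannot equidistribute to two distinct measures, $\mu_{g_i,v} = \mu_{g_j,v}$ for all relevant $i, j$ and every $v$. The key rigidity input is then that two polynomials of equal degree sharing the equilibrium measure at every place are closely related --- by classical work of Julia, Fatou, and Ritt at the archimedean place, with modern refinements due to Levin, Przytycki, and Zdunik, and via work of Baker--DeMarco or Yuan--Zhang in the arithmetic Berkovich setting. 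This should force the $g_i$ to share a common iterate up to Julia-symmetries, rigidly constraining the parameters $(a_i, b_i)$.

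The main obstacle I expect is the final step: upgrading these algebraic coincidences of parameters into genuine preperiodicity of $L$ under $\Phi$. Because $\Phi(L)$ is generically a curve of degree $d$ rather than a line, preperiodicity of $L$ is a strong condition on its orbit in the space of subvarieties, and extracting it will require a careful analysis of how the symmetries preserving a Julia set act on admissible linear parametrizations. The hardest cases are the exceptional ones --- when each $f_i$ is linearly conjugate to $\pm x^d$ or to a Chebyshev polynomial, or more generally when the $f_i$ admit genuine common symmetries --- because then many inequivalent $(a_i,b_i)$ can produce the same equilibrium measure. A careful case analysis, using crucially the hypothesis that all $f_i$ share a common degree so that iterates match up cleanly, should complete the argument.
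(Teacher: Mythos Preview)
Your outline is correct and follows essentially the same route as the paper: conjugate so the line becomes the diagonal, use equidistribution to force the conjugated polynomials to share a Julia set, then invoke rigidity of polynomials with the same Julia set to deduce preperiodicity. Two points of comparison are worth making. First, you propose equidistribution at \emph{every} place and cite the full Berkovich machinery, but the paper only needs a single archimedean embedding $\theta:\Qbar\to\bC$ and the Baker--Hsia equidistribution theorem there; equality of the complex Julia sets $J(g_i^\theta)=J(g_j^\theta)$ is already enough. Second, the step you flag as the main obstacle is exactly where the paper's argument is sharpest: Beardon's results give not merely ``a common iterate up to symmetries'' but the precise relation $g_j = \tau\circ g_i$ for a rotational symmetry $\tau$ of $J(g_i)$, together with the commutation rule $g_i\circ\tau = \tau^{d}\circ g_i$. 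Iterating yields $g_j^k = \tau^{(d^k-1)/(d-1)}\circ g_i^k$, so if $\tau$ has finite order the orbit of the diagonal is visibly finite, while if $\tau$ has infinite order Beardon's classification forces both $g_i,g_j$ to be conjugate to $X^d$ and one appeals to Zhang's Bogomolov theorem for $\bG_m^2$. This replaces your anticipated case analysis of Chebyshev and power maps with a clean dichotomy on the order of a single rotation.
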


The definition of preperiodicity for varieties here is the same as the
usual definition of preperiodicity for points.  More precisely, for
any quasiprojective variety $X$, any endomorphism $\Phi:X\lra X$, and
any subvariety $V\subset X$, we say that $V$ is
\emph{$\Phi$-preperiodic} if there exists $N\ge 0$, and $k\ge 1$ such
that $\Phi^N(V)=\Phi^{N+k}(V)$.  Note that when $A$ is an abelian
variety and $\Phi$ is a multiplication-by-$n$ map (for $n\ge 2$), a subvariety $W$ is
preperiodic if and only if it is a torsion translate of an abelian
subvariety of $A$.

In this paper, we prove the following special case of
Conjecture~\ref{conj}.   
\begin{thm}
\label{MM}
Let $f_1,\dots,f_m\in \Qbar[X]$ be polynomials of degree $d > 1$,
let $\Phi:=(f_1,\dots,f_m)$ be their coordinatewise action on
$\A^m$, and let $L$ be a line in $\A^m$ defined over $\Qbar$.  If $L$
is not $\Phi$-preperiodic, then there exists an $\epsilon > 0$ such
that 
$$ S_{L,\Phi,\epsilon}:=\{ x \in L(\bQb) \; \mid \; \hh_\Phi(x) \le \epsilon \}$$
is finite (see Section~\ref{pre} for the definition of $\hh_{\Phi}$).
\end{thm}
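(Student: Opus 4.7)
Assume for contradiction that $S_{L,\Phi,\epsilon}$ is infinite for every $\epsilon>0$; by extracting a subsequence, obtain an infinite sequence of Galois-distinct points $x_n\in L(\Qbar)$ with $\hh_\Phi(x_n)\to 0$. Parametrize $L$ by an affine map $\phi:\A^1\to\A^m$, $\phi(t)=(a_1t+b_1,\dots,a_mt+b_m)$, and write $x_n=\phi(t_n)$. Since the canonical height decomposes coordinate-wise as $\hh_\Phi(y_1,\dots,y_m)=\sum_i\hh_{f_i}(y_i)$, we obtain $\hh_{f_i}(a_it_n+b_i)\to 0$ for each $i$. Coordinates with $a_i=0$ force $b_i$ to be $f_i$-preperiodic and contribute nothing to $\hh_\Phi$, so discarding them we may assume every $a_i\neq 0$ and $m\geq 2$ (the case $m=1$ reduces to the classical statement for a degree-$d$ polynomial on $\A^1$).

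The plan is then to invoke arithmetic equidistribution on the Berkovich analytic line at every place $v$ of a number field $K$ over which all the relevant data is defined (Yuan's theorem, or in this one-dimensional setting its versions due to Baker--Rumely, Chambert--Loir, and Favre--Rivera-Letelier). Applied to the sequence $a_it_n+b_i$, which has generic small canonical height with respect to $f_i$, it yields that the Galois orbits equidistribute to the canonical measure $\mu_{f_i,v}$ on $\bP^{1,\mathrm{an}}_{\C_v}$. Because the parameters $t_n$ form a \emph{common} underlying sequence for every $i$, the pull-backs $(\phi_i)^*\mu_{f_i,v}$ must all coincide with a single measure $\nu_v$ on $\bP^{1,\mathrm{an}}_{\C_v}$. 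Setting $\psi_{ij}=\phi_j\circ\phi_i^{-1}$, this gives the compatibility
$$(\psi_{ij})_*\mu_{f_i,v}=\mu_{f_j,v}$$
for every pair $i,j$ and every place $v$ of $K$.

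Finally, I would invoke rigidity theorems for polynomial equilibrium measures (Levin, Przytycki, Schmidt--Steinmetz in the complex-dynamical direction, and Baker--DeMarco or Yuan--Zhang in the arithmetic direction): if two polynomials of the same degree $d>1$ have equilibrium measures related by an affine map at every place, then that affine map conjugates a common iterate of one polynomial to a common iterate of the other. Feeding these relations back into the parametrization of $L$, one produces integers $N\geq 0$ and $k\geq 1$ for which $\Phi^N(L)=\Phi^{N+k}(L)$, contradicting the hypothesis that $L$ is not $\Phi$-preperiodic. The main obstacle will be precisely this last step: converting the place-by-place measure-theoretic rigidity into an honest arithmetic conjugacy between iterates of the $f_i$, and then verifying that the resulting relations actually close up the $\Phi$-orbit of $L$ geometrically. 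The delicate sub-cases are those in which some $f_i$ is exceptional---conjugate to $X^d$ or to a Chebyshev polynomial---since the associated Julia measure then has nontrivial symmetries that may supply affine relations not coming from dynamics along $L$, and one has to show that any such symmetry is still compatible with the conclusion that $L$ is preperiodic.
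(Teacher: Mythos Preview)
Your overall architecture---small points $\Rightarrow$ equidistribution $\Rightarrow$ equality of canonical measures after affine change $\Rightarrow$ rigidity $\Rightarrow$ preperiodicity---is exactly the strategy the paper follows, and your reduction (discarding constant coordinates, parametrizing $L$ and pulling everything back to a single $t$-line) matches the paper's setup. Two points are worth flagging.

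First, the paper is considerably more economical than your outline. It reduces immediately to $m=2$ by projecting $L$ onto each pair of coordinates $(1,i)$, and after an affine change it suffices to treat the diagonal $\Delta\subset\A^2$ under $(f_1,\tilde f_i)$ with $\tilde f_i=\sigma_i f_i\sigma_i^{-1}$. It then invokes equidistribution \emph{only at a single archimedean embedding} (Baker--Hsia), which already forces $J(f_1^\theta)=J(\tilde f_i^\theta)$ as complex Julia sets; no Berkovich or adelic machinery is needed. The rigidity input is Beardon's elementary classification: if two polynomials of the same degree share a Julia set then $\tilde f_i=\tau_i\circ f_1$ for some Euclidean symmetry $\tau_i\in\Sigma(f_1)$. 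This is a sharper statement than the one you invoke (``an affine map conjugates a common iterate of one to a common iterate of the other''), and in fact that formulation is not what comes out of Julia-set rigidity---you get composition with a symmetry, not conjugacy. Getting the form of the relation right is what makes the closing step clean.

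Second, the step you identify as the obstacle is handled in the paper by a short explicit computation rather than a further appeal to rigidity. Beardon's relation $f_1\circ\tau_i=\tau_i^{\,d}\circ f_1$ gives
\[
\tilde f_i^{\,k}=\tau_i^{(d^k-1)/(d-1)}\circ f_1^{\,k},
\]
so if $\tau_i$ has finite order the set $\{(f_1^k,\tilde f_i^{\,k})(\Delta)\}_{k\ge 0}$ is finite and $\Delta$ is preperiodic. The only remaining case is $\tau_i$ of infinite order, which by Beardon forces $f_1$ and $\tilde f_i$ to be linearly conjugate to $X^d$; the paper then quotes Zhang's Bogomolov theorem for $\bG_m^2$ to finish. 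In particular the Chebyshev case is \emph{not} exceptional here: its Julia set is an interval, $\Sigma$ is finite, and it falls under the finite-order branch. So your sketch is correct in spirit, but the paper shows you can dispense with all-places equidistribution and with the vaguer ``conjugate iterates'' rigidity, replacing them by one complex embedding plus Beardon's two lemmas and a direct orbit computation.
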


Baker and Hsia \cite[Theorem 8.10]{Baker-Hsia} previously proved
Theorem~\ref{MM} in the special case where $f_1 = f_2$ and $m=2$.
\\
\\
\noindent\textsl{Acknowledgements.}  We would like to thank Arman
Mimar for helpful conversations.

\section{Preliminaries}
\label{pre}
\noindent {\bf Heights.}  Let $\M_\bQ$ be the usual set of absolute
values on $\bQ$, normalized so that the archimedean absolute value is
simply the absolute value $| \cdot|$ and $| p |_p = 1/p$ for each
$p$-adic absolute value $| \cdot |_p$.  For any extension $K$ of $\bQ$
we define $\M_K$ to be the set of absolute values on $K$ that extend
elements of $\M_{\bQ}$.  Then, for any $x \in\bQb$ we define the Weil
height of $x$ to be
$$h(x) = \frac{1}{[\bQ(x):\bQ]} \cdot \sum_{v\in
  \M_{\bQ(x)}}\sum_{\substack{w|v\\ w\in \M_{\bQ(x)}}}
\log\max\{|x|_w^{[\bQ(x)_w:\bQ_v]} , 1\}$$
where $\bQ_v$ and $\bQ(x)_w$ are the completions of $\bQ$ and $\bQ(x)$ at $v$
and $w$ respectively (see \cite[Chapter 1]{BG} for details).  

For a polynomial $f \in\bQb[X]$ of degree greater than 1, define
the $f$-canonical height $\hh_f:\Qbar\lra\mathbb{R}_{\ge 0}$ by
\begin{equation}\label{def}
\hh_f(x) = \lim_{n \to \infty} \frac{h(f^n(x))}{(\deg f)^n},
\end{equation}
following Call-Silverman \cite{CS} (where $f^n$ denotes the $n$-th iterate of $f$).  
 
Let $f_1, \dots, f_m \in \bQb[X]$ be polynomials of degree $d > 1$,  and
let $\Phi:=(f_1,\dots,f_m)$ be their coordinatewise action on
$\A^m$; that is, 
$$ \Phi(x_1, \dots, x_m) = (f_1(x_1), \dots, f_m(x_m)).$$
We define the $\Phi$-canonical height
$\hh_{\Phi}:\A^m(\Qbar)\lra\mathbb{R}_{\ge 0}$ by
$$\hh_{\Phi}(x_1,\dots,x_m)= \sum_{i=1}^m\hh_{f_i}(x_i) .$$

Note that while $\bA^m$ is not a projective variety, $\Phi$ extends to
a map ${\tilde \Phi}: (\bP^1)^m \lra (\bP^1)^m$.  Furthermore,
${\tilde \Phi}$ is polarizable, since 
$${\tilde \Phi}^* \bigotimes_{i=1}^m \pr_i^* \cO_{\bP^1}(1) 
\cong \bigotimes_{i=1}^m \pr_i^* \cO_{\bP^1}(d),$$ 
where $\pr_i$ is the projection of $(\bP^1)^m$ onto its $i$-th
coordinate.

\begin{remark}
  Theorem~\ref{MM} is {\it not} true if one allows the polynomials
  $f_i$ to have different degrees.  This is easily seen, for example,
  in the case where $m=2$, the line $L$ is the diagonal, and $f_2 =
  f_1^2$.  The map $\Phi = (f_1, \dots, f_m)$ is only polarizable when
  $\deg f_i = \deg f_j$, so this is not a counterexample to
  Conjecture~\ref{conj}.
\end{remark}

\section{Symmetries of the Julia set}
\label{Julia}

In this section we recall the main results regarding polynomials which share the same Julia set. For a polynomial $f \in \bC[X]$, we let $J(f)$
denote the Julia set of $f$ (see \cite[Chapter 3]{Beardon} or
\cite{milnor} for the definition of a Julia set of a rational
function over the complex numbers).
As proved by Beardon \cite{Beardon-3, Beardon-2}, any family of polynomials which have the same Julia set $\cJ$ is determined by the symmetries of $\cJ$.
\begin{defi}
\label{symmetries of Julia sets}
If $f\in\C[z]$ is a polynomial and $J(f)$ is its Julia set, then the symmetry group $\Sigma(f)$ of $J(f)$ is defined by
$$\Sigma(f)=\{\sigma\in \mathfrak{C}\text{ : }\sigma(J(f))=J(f)\},$$
where $\mathfrak{C}$ is the group of conformal Euclidean isometries.
\end{defi}

Beardon \cite[Lemma 3]{Beardon-3} computed $\Sigma(f)$ for any $f\in \C[z]$. 
\begin{lemma}
\label{l-3 b-3}
The isometry group $\Sigma(f)$ is a group of rotations about some point $\zeta\in\C$, and it is either trivial, or finite cyclic group, or the group of all rotations about $\zeta$.
\end{lemma}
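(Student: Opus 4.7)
The plan is to exploit two facts about $J(f)$: it is nonempty and bounded (the complement of the Fatou set, and any $z$ with $|z|$ sufficiently large escapes to $\infty$), and it is closed. Conformal Euclidean isometries of $\C$ are orientation-preserving affine maps $\sigma(z)=az+b$ with $|a|=1$, so every $\sigma\in\Sigma(f)$ is either a translation ($a=1$) or a rotation about the unique fixed point $\zeta_\sigma=b/(1-a)$.

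First I would rule out nontrivial translations. Iterating $\sigma(z)=z+b$ with $b\ne 0$ moves any point $z_0\in J(f)$ to infinity, contradicting boundedness of $J(f)$; so every nonidentity element of $\Sigma(f)$ is a genuine rotation. Next, the key step: $\Sigma(f)$ is abelian. Given $\sigma_1,\sigma_2\in\Sigma(f)$, their commutator $[\sigma_1,\sigma_2]$ is again in $\Sigma(f)$, but it lies in the kernel of the angle homomorphism $az+b\mapsto a$ (translations form a normal subgroup of the group of orientation-preserving isometries), so $[\sigma_1,\sigma_2]$ is a translation. By the previous step it must be the identity. A direct computation shows that two nontrivial rotations in $\C$ commute if and only if they share the same center; hence there is a single point $\zeta\in\C$ about which every element of $\Sigma(f)$ rotates.

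It remains to identify the closed subgroups of the rotation group $\{R_\theta\}_{\theta\in[0,2\pi)}$ about $\zeta$, which is topologically $S^1$. Closedness of $\Sigma(f)$ is an easy verification: if $R_{\theta_n}\in\Sigma(f)$ with $\theta_n\to\theta$, then for any $z\in J(f)$ we have $R_{\theta_n}(z)\to R_\theta(z)$, and since $J(f)$ is closed, $R_\theta(z)\in J(f)$; the same holds for $R_\theta^{-1}=R_{-\theta}$, so $R_\theta\in\Sigma(f)$. A closed subgroup of $S^1$ is either trivial, finite cyclic, or all of $S^1$, completing the proof.

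The main obstacle is the commuting step: one needs to notice that the commutator of any two orientation-preserving Euclidean isometries is a translation, together with the boundedness argument that forces all translations in $\Sigma(f)$ to be trivial. Once this is in place, the rest is essentially bookkeeping about closed subgroups of the circle.
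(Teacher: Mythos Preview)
Your argument is correct and takes a genuinely different route from the one the paper sketches (following Beardon). You work intrinsically with the Julia set: boundedness of $J(f)$ kills nontrivial translations, the observation that commutators in the affine isometry group are translations forces $\Sigma(f)$ to be abelian and hence concentrated at a single center $\zeta$, and closedness of $\Sigma(f)$ inside the rotation group about $\zeta$ (which you verify using that $J(f)$ is closed) gives the trichotomy via the classification of closed subgroups of $S^1$. Nothing about the specific polynomial is used beyond $J(f)$ being nonempty and compact.

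The paper's approach instead conjugates $f$ to a monic polynomial $\tilde f$ with vanishing subleading coefficient and reads $\Sigma(\tilde f)$ directly off the shape of $\tilde f$: it is all rotations about $0$ precisely when $\tilde f(z)=z^d$, and otherwise it is cyclic of order $b$, where $b$ is maximal with $\tilde f(z)=z^a\tilde f_1(z^b)$. As a proof of the bare trichotomy this is less economical than yours, but it delivers explicit structural information---most importantly the equivalence between $\Sigma(f)$ being infinite and $f$ being conjugate to $z^d$---which the paper actually invokes later in the proof of Theorem~\ref{MM} to reduce the infinite-order case to Zhang's Bogomolov theorem for $\bG_m^2$. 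Your argument proves the lemma as stated but does not by itself supply that extra identification.
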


To prove the result above, Beardon uses the fact that each polynomial
$f$ of degree $d\ge 2$ is conjugate to a monic polynomial
$\tilde{f}\in\C[z]$ which has no term in $z^{d-1}$. Clearly, if
$\tilde{f}=\gamma \circ f \circ \gamma^{-1}$ where $\gamma\in\C[z]$ is
a linear polynomial, then $J(\tilde{f})=\gamma(J(f))$; thus it
suffices to prove Lemma~\ref{l-3 b-3} for $\tilde{f}$ instead of $f$.
If $\tilde{f}(z)=z^d$, then $J(\tilde{f})$ is the unit circle, and
$\Sigma(\tilde{f})$ is the group of all rotations about $0$ (see also
\cite[Lemma 4]{Beardon-3}).  If $\tilde{f}$ is not a monomial, then we
choose $b\ge 1$ maximal such that we can find $a\ge 0$ and
$\tilde{f}_1\in\C[z]$ satisfying
$\tilde{f}(z)=z^a\tilde{f}_1(z^b)$. In this case, $J(\tilde{f})$ is
the finite cyclic group of rotations generated by the multiplication
by $\exp(2\pi i/b)$ on $\C$ (see \cite[Theorem 5]{Beardon-3}).
Beardon also proves the following result (see \cite[Lemma
7]{Beardon-3}) which we will use later.
\begin{lemma}
\label{l-7 b-3}
If $f\in\C[z]$ is a polynomial of degree $d\ge 2$ and $\sigma\in J(f)$, then $f\circ \sigma = \sigma^d \circ f$. 
\end{lemma}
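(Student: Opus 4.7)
The plan is to reduce to the normalized form of $f$ and then verify the identity by a direct computation using the structure of $\Sigma(f)$ recalled above. (I read ``$\sigma \in J(f)$'' in the statement as a typo for $\sigma \in \Sigma(f)$, since $\sigma$ must be a map for the identity to make sense.)

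First, I would choose a linear polynomial $\gamma(z) = \alpha z + \beta$ so that $\tilde f := \gamma \circ f \circ \gamma^{-1}$ is monic with no $z^{d-1}$ term, and set $\tilde\sigma := \gamma \circ \sigma \circ \gamma^{-1}$. If $\sigma(z) = az + b$ with $|a|=1$, then a direct calculation gives $\tilde\sigma(z) = az + (\alpha b + (1-a)\beta)$, still a conformal Euclidean isometry with the same rotation coefficient $a$; and $\tilde\sigma(J(\tilde f)) = \gamma(\sigma(J(f))) = \gamma(J(f)) = J(\tilde f)$, so $\tilde\sigma \in \Sigma(\tilde f)$. The relation $\tilde f \circ \tilde\sigma = \tilde\sigma^d \circ \tilde f$ unwraps, after cancelling $\gamma$ and $\gamma^{-1}$ on the two outermost positions, to precisely $f \circ \sigma = \sigma^d \circ f$. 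So it suffices to prove the lemma for $\tilde f$ and $\tilde\sigma$ in place of $f$ and $\sigma$.

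By the description of $\Sigma(\tilde f)$ recorded right after Lemma~\ref{l-3 b-3}, every element of $\Sigma(\tilde f)$ is a rotation $\tilde\sigma(z) = \omega z$ about the origin. When $\tilde f(z) = z^d$, the parameter $\omega$ is an arbitrary element of the unit circle, and
$$ \tilde f(\tilde\sigma(z)) = (\omega z)^d = \omega^d z^d = \omega^d \tilde f(z) = \tilde\sigma^d(\tilde f(z)). $$
Otherwise, write $\tilde f(z) = z^a \tilde f_1(z^b)$ with $b\ge 1$ maximal; then $\omega$ is a $b$-th root of unity, so $\omega^b = 1$ and
$$ \tilde f(\tilde\sigma(z)) = (\omega z)^a \tilde f_1((\omega z)^b) = \omega^a z^a \tilde f_1(z^b) = \omega^a \tilde f(z), $$
while $\tilde\sigma^d(\tilde f(z)) = \omega^d \tilde f(z)$. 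Matching the two sides reduces to the identity $\omega^{d-a} = 1$, which holds because $d - a = b \cdot \deg \tilde f_1$ is a multiple of $b$.

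I do not anticipate a serious obstacle: once the normalization is in place the verification is immediate. The only points requiring a little care are checking that conjugation by the affine map $\gamma$ really does send Euclidean isometries to Euclidean isometries (handled by the explicit formula for $\tilde\sigma$ above) and that in the normalized form the center of every rotation in $\Sigma(\tilde f)$ is forced to be $0$ (guaranteed by the explicit description of $\Sigma(\tilde f)$ quoted from Beardon). Everything else is polynomial algebra.
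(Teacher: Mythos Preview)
Your argument is correct, including the identification of the typo ($\sigma\in\Sigma(f)$, not $J(f)$). Note, however, that the paper does not supply its own proof of this lemma: it simply quotes it as \cite[Lemma~7]{Beardon-3}. Your proof is precisely the one Beardon gives, using exactly the normalization and the explicit description of $\Sigma(\tilde f)$ that the paper recalls in the paragraph following Lemma~\ref{l-3 b-3}; so there is nothing to compare against beyond saying that you have reconstructed Beardon's argument faithfully.
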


The following classification of polynomials which have the same Julia set is proven by Beardon in \cite[Theorem 1]{Beardon-2}.
\begin{lemma}
\label{t-1 b-2}
If $f,g\in \C[z]$ have the same Julia set, then there exists $\sigma\in\Sigma(f)$ such that $g=\sigma \circ f$.
\end{lemma}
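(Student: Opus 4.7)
The plan is to compare $f$ and $g$ via their B\"ottcher coordinates at $\infty$. Since $g=\sigma\circ f$ with $\sigma\in\mathfrak{C}$ forces $\deg f=\deg g$, I take this common degree $d$ as given (in Beardon's setup the equality of degrees is, in any case, a consequence of $J(f)=J(g)$ via capacity and Green's-function considerations). For $h\in\{f,g\}$, let $\phi_h$ denote the B\"ottcher coordinate of $h$: a holomorphic injection defined on the basin of attraction of $\infty$ (a neighborhood of $\infty$ when the Julia set is disconnected, and the whole basin when it is connected), satisfying $\phi_h(h(z))=\phi_h(z)^d$ and $|\phi_h(z)|=\exp G(z)$, where $G$ is the Green's function of $\C\setminus K$ with pole at $\infty$ and $K$ is the filled Julia set. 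Since $J(f)=J(g)$ determines $K$ and hence $G$, one has $|\phi_f|=|\phi_g|$ on the common domain.

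Consequently $\phi_g/\phi_f$ is a holomorphic function of constant modulus $1$ on a connected open set, hence equals a constant $\lambda$ with $|\lambda|=1$. Substituting $\phi_g=\lambda\phi_f$ into the functional equation $\phi_g(g(z))=\phi_g(z)^d$ yields
\[
\phi_f(g(z))\;=\;\lambda^{d-1}\,\phi_f(f(z)).
\]
Injectivity of $\phi_f$ then forces $g(z_1)=g(z_2)$ whenever $f(z_1)=f(z_2)$ on the basin; by analytic continuation this fiber-constancy extends to all of $\C$. The rule $\sigma(w):=g(z)$ for any $z\in f^{-1}(w)$ therefore defines a holomorphic function on $\C$: it is manifestly holomorphic off the finite set of critical values of $f$ (being $g$ composed with a local branch of $f^{-1}$), and extends holomorphically across those critical values by the Riemann removable singularity theorem. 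A degree count in $g=\sigma\circ f$ forces $\sigma$ to be a polynomial of degree $1$, and comparing the B\"ottcher asymptotics $\phi_h(z)\sim c_h z$ shows that its linear coefficient is $\lambda^{d-1}$, of modulus $1$, so $\sigma\in\mathfrak{C}$.

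It remains to verify $\sigma\in\Sigma(f)$, i.e., $\sigma(J(f))=J(f)$. Using $g=\sigma\circ f$ together with the total invariance of the Julia set of any polynomial and $J(f)=J(g)$, one computes
\[
f^{-1}(J(f))\;=\;J(f)\;=\;g^{-1}(J(g))\;=\;g^{-1}(J(f))\;=\;f^{-1}\bigl(\sigma^{-1}(J(f))\bigr),
\]
and, since $f\colon\C\to\C$ is surjective, applying $f$ to both sides yields $\sigma^{-1}(J(f))=J(f)$; equivalently, $\sigma\in\Sigma(f)$. The main obstacle in this argument is the passage from the B\"ottcher-coordinate identity, which a priori lives only on the basin of $\infty$, to the identification of $\sigma$ as a globally defined entire function of degree $1$ on $\C$; it is here that the fiber-constancy of $g$ along $f$, together with removable singularities at the critical values of $f$, does the real work.
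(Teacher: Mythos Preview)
The paper does not prove this lemma; it merely quotes it as \cite[Theorem~1]{Beardon-2}. So there is no in-paper argument to compare against. Your B\"ottcher-coordinate approach is essentially the standard one, and the main line of the argument is sound: equal filled Julia sets give a common Green's function, hence $|\phi_f|=|\phi_g|$, hence $\phi_g=\lambda\phi_f$ with $|\lambda|=1$, and then $g=\sigma\circ f$ with $\sigma$ affine of rotational part $\lambda^{d-1}$; the computation $f^{-1}(J(f))=f^{-1}(\sigma^{-1}(J(f)))$ together with surjectivity of $f$ correctly yields $\sigma\in\Sigma(f)$.

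One genuine error: your parenthetical claim that ``the equality of degrees is, in any case, a consequence of $J(f)=J(g)$ via capacity and Green's-function considerations'' is false. The polynomials $z^2$ and $z^3$ share the unit circle as Julia set, and the Chebyshev polynomials $T_2$, $T_3$ share the segment $[-2,2]$. The hypothesis $\deg f=\deg g$ is a genuine assumption in Beardon's theorem (the paper's statement suppresses it, but it is needed), and you are right to impose it; only your justification is wrong.

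One loose step: the passage ``by analytic continuation this fiber-constancy extends to all of $\C$'' is underexplained, since $\phi_f$ need not extend injectively to the full basin when $J(f)$ is disconnected. A cleaner route avoids fibers entirely: once $g=\psi\circ f$ holds near $\infty$ with $\psi(w)=\lambda^{d-1}w+c_0+c_{-1}w^{-1}+\cdots$, the polynomial $g(z)-\lambda^{d-1}f(z)-c_0$ tends to $0$ as $z\to\infty$ and therefore vanishes identically, giving $\sigma(w)=\lambda^{d-1}w+c_0$ on the nose. Alternatively, every irreducible component of $\{(z_1,z_2):f(z_1)=f(z_2)\}\subset\C^2$ can only accumulate at $(\infty,\infty)$ in $\bP^1\times\bP^1$, so the identity $g(z_1)=g(z_2)$ established near $(\infty,\infty)$ propagates to each component by the identity principle. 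Either way the gap is easily closed.
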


\section{Proof of our main result} 
\label{proof main}

\begin{proof}[Proof of Theorem~\ref{MM}.]
  Suppose that for every $\epsilon>0$, the set $S_{L,\Phi,\epsilon}$
  is infinite.  We will show that this implies that $L$ must be
  $\Phi$-preperiodic.

We first note that it suffices to prove the theorem for the line
$L' = (\sigma_1, \dots, \sigma_m)(L)$ and the map 
$$\Phi' = (\sigma_1 \circ f_1 \circ \sigma_1^{-1}, \dots, \sigma_m \circ f_m \circ \sigma_m^{-1})$$ 
for some linear automorphisms $\sigma_1, \dots,
\sigma_m$ of $\bA^1$.  This follows from the fact that $L$ is
preperiodic for $\Phi$ if and only if $ L'$ is
preperiodic for $\Phi'$ along with the equality
\begin{equation}\label{change}
 \hh_{\Phi'}(\sigma_1 \alpha_1, \dots, \sigma_m \alpha_m) =  \hh_{\Phi}(\alpha_1,
\dots, \alpha_m).
\end{equation}
Note that \eqref{change} is a simple consequence of Definition~\ref{def},
since $|h(\sigma_i x) - h(x)|$ is bounded for all $x \in \bQb$.

We now proceed by induction on $m$; the case $m=1$ is obvious.

If the projection of $L$ on any of the coordinates consists of only
one point, we are done by the inductive hypothesis. Indeed, without
loss of generality, assume the projection of $L$ on the first
coordinate equals $\{z_1\}$, then $L=\{z_1\}\times L_1$, where
$L_1\subset \A^{m-1}$ is a line, and $\hh_{f_1}(z_1)=0$. Since only
preperiodic points have canonical height equal to $0$ (see \cite[Cor.\ $1.1.1$]{CS}), we conclude
that $z_1$ is $f_1$-preperiodic, and thus we are done by the induction
hypothesis applied to $L_1$.

Suppose now that $L$ projects dominantly onto each coordinate of
$\A^m$. For each $i=2,\dots,m$, we let $L_i$ be the projection of $L$
on the first and the $i$-th coordinates of $\A^m$. Then $L_i$ is a
line given by an equation $X_1=\sigma_i(X_i)$, for some linear
polynomial $\sigma_i\in\Qbar[X]$. Clearly, it suffices to show that
for each $i=2,\dots,m$, the line $L_i$ is preperiodic under the action
of $(f_1,f_i)$ on the corresponding two coordinates of $\A^m$.

Let $\tilde{f_i}:=\sigma_i \circ f_i \circ \sigma_i^{-1}$ and let $\Delta = (x,x)
\in \bA^2$ be the diagonal on $\bA^2$.  By our remarks at the
beginning of the proof, it suffices to show that $(\id, \sigma_i) (L_i)
= \Delta$ is preperiodic under the action of $(f_1,\tilde{f_i})$.
Furthermore, the fact that we have an infinite sequence
$(z_{n,1},z_{n,i})\in L_i(\Qbar)$ with
$$\lim_{n\to\infty} \hh_{f_1}(z_{n,1}) = \lim_{n\to\infty}\hh_{f_i}(z_{n,i})=0$$
means that we have
$$\lim_{n\to\infty}\hh_{\tilde{f_i}}(z_{n,1})=0,$$
because of \eqref{change}.  Fix an embedding $\theta: \bQb \lra \bC$ and let $f_1^\theta$ and ${\tilde f_i}^\theta$ be the images of
$f_1$ and ${\tilde f_i}$, respectively, in $\bC[X]$ under this embedding.  Then, by
\cite[Corollary 4.6]{Baker-Hsia}, the Galois orbits of the points
$\{z_{n,1}\}_{n\in\N}$ are equidistributed with respect to the
equilibrium measures on the Julia sets of both $f_1^\theta$ and
$\tilde{f_i}^\theta$.  Since the support of the equilibrium measure $\mu_g$
of a polynomial $g \in \bC[X]$ is equal to the Julia set of $g$ (\cite[Section
4]{Baker-Hsia}), we must have $J({\tilde f_i}^\theta) = J(f_1^\theta)$.

By Lemma~\ref{t-1 b-2} (see also \cite{Baker-Eremenko, pau}),
there exists a conformal Euclidean symmetry $\mu_i: z \lra a_iz + b_i$
such that $\mu_i(J(f_1^\theta)) = J(f_1^\theta)$ and ${\tilde
  f_i^\theta}= \mu_i \circ f_1^\theta$.  Note that $a_i$ and $b_i$ must be in
the image of $\bQb$ under $\theta$ since the coefficients of
$f_1^\theta$ and $f_i^\theta$ are.  Let $\tau_i$ be the map $\tau_i: z
\lra \theta^{-1}(a_i) z + \theta^{-1}(b_i)$.  Then we have $\tilde{f_i} =
\tau_i \circ f_1$.

 If $\tau_i$ has infinite order, then it follows from \cite[Lemma
 4]{Beardon-3} that there exist linear polynomials $\gamma_1,
 \gamma_i$ such that $\gamma_1 \circ f_1 \circ \gamma_1^{-1} = \gamma_i \circ
 \tilde{f_i} \circ \gamma_i^{-1} = X^d$. In this case, we reduce our problem
 to the usual Bogomolov conjecture for $\bG_m^2$, proved by Zhang
 \cite{zhangthese}.  Indeed, Zhang proves that if a curve $C$ in
 $\bG_m^2$ has an infinite family of algebraic points with height
 tending to zero, then it must be a torsion translate of an algebraic
 subgroup of $\bG_m^2$; that is, $C = \xi A$ where $\xi$ has finite
 order and $A$ is an algebraic subgroup of $\bG_m^2$.  Since $(\xi
 A)^n = \xi^n A$ and $\xi$ has finite order, it is clear that such a
 curve is preperiodic under the map $(X,Y) \mapsto (X^d, Y^d)$.

 We may suppose then that $\tau_i$ has finite order. By Lemma~\ref{l-7 b-3}, we have $f_1 \circ \tau_i = \tau_i^{d} \circ
 f_1$. Thus, we have
$$\tilde{f_i}^k = \tau_i^{(d^k - 1)/(d-1)} \circ f_1^k$$
for all $k \ge 1$.  Since $\tau_i$ has finite order, we conclude that
the set 
$$\{ \tau_i^{(d^k-1)/(d-1)}\}_{k\ge 0}$$
is finite. This implies that the set of curves of the form $ (f_1^k,
\tilde{f_i}^k)(\Delta)$ is finite, which means the diagonal subvariety
$\Delta$ is preperiodic under the action of $(f_1,\tilde{f_i})$, as desired.
\end{proof}

We believe that it is possible to extend the methods of the proof of
Theorem~\ref{MM} to the case of arbitrary rational maps $\varphi_1,
\dots, \varphi_m$ of the same degree, though the proof seems to be
much more difficult, requiring in particular Mimar's \cite{Mimar}
results on arithmetic intersections of metrized line bundles and an
analysis of Douady-Hubbard-Thurston's \cite{DH} classification of
critically finite rational maps with parabolic orbifolds.  We intend
to treat this problem in a future paper.


\def\cprime{$'$} \def\cprime{$'$} \def\cprime{$'$} \def\cprime{$'$}
\providecommand{\bysame}{\leavevmode\hbox to3em{\hrulefill}\thinspace}
\providecommand{\MR}{\relax\ifhmode\unskip\space\fi MR }
\providecommand{\MRhref}[2]{%
  \href{http://www.ams.org/mathscinet-getitem?mr=#1}{#2}
}
\providecommand{\href}[2]{#2}

\end{document}